\newcommand{\fref}[1]{\hyperref[{#1}]{\ref*{#1}}}
\newcommand{\Eb}{\mathbb{E}}
\newcommand{\Lb}{\mathbb{L}}
\newcommand{\Pb}{\mathbb{P}}
\newcommand{\Tb}{\mathbb{T}}
\newcommand{\Zb}{\mathbb{Z}}
\newcommand{\Ac}{\mathcal{A}}
\newcommand{\Fc}{\mathcal{F}}
\newcommand{\Oc}{\mathcal{O}}
\newcommand{\Uc}{\mathcal{U}}
\newcommand{\Ls}{\mathscr{L}}
\newcommand{\coim}{\mathrm{\coim}}
\newcommand{\Hom}{\mathrm{Hom}}
\newcommand{\Pic}{\mathrm{Pic}}
\newtheorem{theo}{Tplottin ubuntuheorem}[section]
\theoremstyle{plain}
\newtheorem{thm}[theo]{Theorem}
\newtheorem{lem}[theo]{Lemma}
\newtheorem{prop}[theo]{Proposition}
\newtheorem*{thm*}{Theorem}
\newtheorem*{lem*}{Lemma}
\newtheorem*{prop*}{Proposition}
\newtheorem*{cor*}{Corollary}
\theoremstyle{definition}
\theoremstyle{remark}
\newtheorem{rem}[theo]{Remark}
\title{On line bundles in derived algebraic geometry}
\author{Toni Annala}
\date{}
\begin{document}

\maketitle
\begin{abstract}
We give examples of derived schemes $X$ and a line bundle $\Ls$ on the truncation $tX$ so that $\Ls$ does not extend to the original derived scheme $X$. In other words the pullback map $\Pic(X) \to \Pic(tX)$ is not surjective. Our examples have the further property that, while their truncations are projective hypersurfaces, they fail to have any nontrivial line bundles, and hence they are not quasi-projective.
\end{abstract}

\tableofcontents

\section{Introduction}

It is a well known fact (see \cite{KST}), that for an affine derived scheme $X$, the induced pullback map $K^0(X) \to K^0(tX)$ is an isomorphism. However, due to the form that the descent spectral sequence takes, one would expect the $K^0$ of a derived scheme to be different from that of its truncation in general. 

Moreover, a more computable invariant -- the Picard group of $X$ -- is a summand of $K^0(X)$: we have a map $\Pic(X) \to K^0(X)$ that sends a line bundle to its $K$-theory class, and a one sided inverse is induced by the perfect determinant map of Schürg-Toën-Vezzosi \cite{STV} as the determinant of a line bundle $\Ls$, regarded as a perfect complex, is again $\Ls$. We can therefore conclude that if the pullback map $\Pic(X) \to \Pic(Y)$ fails to be injective (surjective), then so does the map $K^0(X) \to K^0(Y)$.

It is not very hard to find examples of derived schemes $X$ so that the map $\Pic(X) \to \Pic(tX)$ is not injective. Some kind of trivial derived enhancement will often have this property: for example, take the derived scheme whose underlying scheme is $\Pb^2$, and whose structure sheaf is given by the trivial square zero extension $\Oc_{\Pb^2} \oplus \Oc_{\Pb^2}(-3)[-1]$. One can compute, either using the descent spectral sequence or the deformation sequences as in \fref{DeformationSequences}, that the Picard group of $X$ is isomorphic to $\Zb \oplus k$. 

However, finding an example so that $\Pic(X) \to \Pic(tX)$ fails to be surjective is harder, as a trivial extension will not work anymore. It is also a much more interesting question. Consider for example the question of whether or not a derived scheme $X$ is quasi-projective. In \cite{An2} it was noted that a derived scheme $X$ is quasi-projective if and only if it has an \emph{ample line bundle}, i.e., a line bundle whose truncation is ample on $tX$. Hence, the question of whether or not $X$ is quasi-projective can be divided into two parts: 
\begin{itemize}
\item is the truncation $tX$ quasi-projective;
\item does an ample line bundle on $tX$ extend to $X$;
\end{itemize}
and the second question is obviously related to the surjectivity of the map $\Pic(X) \to \Pic(tX)$.

The main purpose of this article is finding an example of a derived scheme $X$ such that the pullback map $\Pic(X) \to \Pic(tX)$ is not surjective. However, in the examples we construct, $\Pic(X)$ is trivial while the truncation $tX$ is a projective hypersurface, realizing the second obstruction (as above) to quasi-projectivity. The examples are constructed in Section \fref{Computation}, and verifying the desired properties is an easy computation involving nothing else than just the basic graduate knowledge of algebraic geometry. However, justifying these computations takes a bit more work, and is done in section \fref{DeformationTheory}.

\subsection*{Conventions}

Throughout this article, we are going to work over a field $k$ of characteristic $0$. A \emph{derived ring} (over $k$) will be either a simplicial $k$-algebra, connective differential graded $k$-algebra or a connective $\Eb_\infty$-ring spectrum over $k$. Under our characteristic 0 assumption, all the above models are known to agree. All derived rings will be assumed to be commutative (in homotopical sense). As everything in this paper should be assumed to be  derived, we will often drop the word ''derived'' to not to burden the exposition. We will denote by $[-n]$ the operation of $n$-fold suspension $\Sigma^n$, which in the dg-world corresponds to the homological shift upwards $n$ times. Throughout the article, unless otherwise specified, $X$ will be a derived scheme over $k$. All derived schemes are assumed to be separated.

\subsection*{Acknowledgements}

The author would like to thank his advisor Kalle Karu for useful discussions relating the material of this article back to classical algebraic geometry. The author is supported by the Vilho, Yrjö and Kalle Väisälä Foundation of the Finnish Academy of Science and Letters.

\section{Deformation theory}\label{DeformationTheory}

A natural way to approach the problem is using the derived version of deformation theory. All the main references \cite{TV2}, \cite{GR}, \cite{Lur2} and \cite{Lur3} for derived algebraic geometry deal with the subject in one form or another. Other sources include short notes \cite{PV} made available by Porta and Vezzosi.

\subsection{Background}

Deformation theory of a derived scheme is controlled by its cotangent complex. Let us recall the definition of a derived infinitesimal extension of a (derived) ring. Suppose $A$ is a ring, $B$ is an $n$-truncated $A$-algebra and let $M$ be a $\pi_0(B)$-module. An infinitesimal extension of $B$ by $M[-n-1]$ as an $A$-algebra is an $A$-algebra map $B' \to B$ whose homotopy fibre (as $B'$-modules) is equivalent to $M[-n-1]$. We note that in this case $\pi_i(B') = \pi_i(B)$ when $n \leq n$, $\pi_{n+1}(B') = M$ and $\pi_i(B')=0$ for $i > n+1$. We recall how infinitesimal extensions can be classified using the cotangent complex $\Lb_{B/A}$:

\begin{thm}
The derived infinitesimal extensions of the $A$-algebra $B$ by $M[-n-1]$ are classified by elements of $\pi_0(\Hom_B(\Lb_{B/A}, M[-n-2])) \cong \pi_{-n-2}(\Hom_B(\Lb_{B/A}, M))$.

The relationship is as follows: A map $\Lb_{B/A} \to M[-n-2]$ is exactly the data of an $A$-derivation $d: B \to M[-n-2]$, which in turn is exactly the data of a map of $A$-algebras $(1,d): B \to B \oplus M[-n-2]$ where $B \oplus M[-n-2]$ is the \emph{trivial square zero extension}. Now the infinitesimal extension $B' \to B$ associated to $d$ is given as the pullback
\begin{center}
\begin{tikzpicture}[scale = 1.5]
\node (A2) at (2,2) {$B$};
\node (B2) at (2,1) {$B \oplus M[-n-2]$};
\node (A1) at (0,2) {$B'$};
\node (B1) at (0,1) {$B$};
\path[every node/.style={font=\sffamily\small}]
(A1) edge[->] (A2)
(B1) edge[->] node[below]{$(1,d)$}(B2)
(A1) edge[->] (B1)
(A2) edge[->] node[right]{$(1,0)$} (B2)
;
\end{tikzpicture}
\end{center} 
where $0$ is the trivial derivation.
\end{thm}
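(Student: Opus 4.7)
The plan is to construct mutually inverse assignments between equivalence classes of infinitesimal extensions and the claimed $\pi_0$-group, using the universal property of the cotangent complex throughout. By definition, a class $d \in \pi_0 \Hom_B(\Lb_{B/A}, M[-n-2])$ corresponds to a homotopy class of $A$-algebra sections $(1,d): B \to B \oplus M[-n-2]$ of the projection. Given such $d$, one forms the derived pullback $B'$ exactly as in the displayed square; the fibre of $B' \to B$ agrees, by the pasting lemma for pullbacks, with the fibre of $(1,0): B \to B \oplus M[-n-2]$, which as a $B$-module is $\Omega\, M[-n-2] \simeq M[-n-1]$. Thus $B' \to B$ is indeed an infinitesimal extension of $B$ by $M[-n-1]$, and the homotopy-type computation of $\pi_i(B')$ stated just before the theorem falls out from the associated long exact sequence.

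For the converse direction, given an arbitrary infinitesimal extension $B' \to B$ by $M[-n-1]$, I would apply the transitivity cofibre sequence
\[
\Lb_{B'/A} \otimes_{B'} B \longrightarrow \Lb_{B/A} \longrightarrow \Lb_{B/B'}
\]
associated to the tower $A \to B' \to B$. The essential input is the identification $\Lb_{B/B'} \simeq M[-n-2]$, a standard fact about square-zero extensions in derived algebraic geometry relating the relative cotangent complex to the suspension of the fibre. Composing the right-hand map with this equivalence produces the required element of $\pi_0 \Hom_B(\Lb_{B/A}, M[-n-2])$.

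The main obstacle is verifying that these two assignments are mutually inverse, rather than merely well-defined. The cleanest route is to upgrade both sides of the claimed bijection to spaces: the classifying space of infinitesimal extensions of $B$ by $M[-n-1]$, regarded as a full subspace of the $\infty$-category of $A$-algebras over $B$, and the mapping space $\mathrm{Map}_B(\Lb_{B/A}, M[-n-2])$. The pullback construction applied to the universal section $(1,\mathrm{id})$ of the trivial square-zero extension $B \oplus M[-n-2] \to B$ then realizes a universal infinitesimal extension, and universality forces the two assignments to be mutually inverse on $\pi_0$. In practice one would invoke this equivalence from one of the standard references on derived deformation theory (e.g.\ \cite{Lur2}, \cite{Lur3}, \cite{TV2}) rather than prove it from scratch, since the present paper only uses the $\pi_0$-level consequence.
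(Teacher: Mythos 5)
The paper never proves this statement: it is stated as recalled background, with the classification deferred to the standard references, so there is no internal argument for your sketch to diverge from. Your outline is the standard one and is fine in the forward direction: in the displayed pullback square the fibre of $B' \to B$ agrees with the fibre of $(1,0)\colon B \to B \oplus M[-n-2]$, which is $\Omega\, M[-n-2] \simeq M[-n-1]$, giving the homotopy groups listed before the theorem. One step of your converse is imprecise as written: for an extension $B' \to B$ with fibre $M[-n-1]$ it is \emph{not} true in general that $\Lb_{B/B'} \simeq M[-n-2]$; what is true is that the canonical map $\Sigma\bigl(\mathrm{fib}(B' \to B) \otimes_{B'} B\bigr) \to \Lb_{B/B'}$ has highly connective cofibre (roughly $(2n+3)$-connective, since the fibre is $(n+1)$-connective), so it becomes an equivalence after truncation in degrees $\leq n+2$; because the target $M[-n-2]$ is $(n+2)$-truncated, this still produces the desired classifying map $\Lb_{B/A} \to \Lb_{B/B'} \to M[-n-2]$, but the truncation caveat is needed. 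Moreover, as you yourself note, the genuinely nontrivial content is that the two assignments are mutually inverse --- equivalently, that every such small extension is a square-zero extension classified by a derivation --- and this is exactly the theorem one cites from \cite{Lur2} (the results around HA 7.4.1.23 and 7.4.1.26) or \cite{TV2}; since the paper also treats the statement as a citation, your proposal is consistent with the paper's treatment, just more explicit about the mechanism, provided the $\Lb_{B/B'}$ identification is stated with the truncation qualifier.
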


In this paper we will be interested in the case when $A = k$ is a field and the $k$-algebra $B$ is $0$-truncated. Moreover, we want to find an extension $B'$ of $B$ to a $k[S^n]$-algebra. In other words, we are interested in all pushout diagrams 
\begin{center}
\begin{tikzpicture}[scale = 1.5]
\node (A2) at (2,2) {$k$};
\node (B2) at (2,1) {$B$};
\node (A1) at (0,2) {$k[S^n]$};
\node (B1) at (0,1) {$B'$};
\path[every node/.style={font=\sffamily\small}]
(A1) edge[->] (A2)
(B1) edge[->] (B2)
(A1) edge[->] (B1)
(A2) edge[->] (B2)
;
\end{tikzpicture}
\end{center} 
Here $k[S^n]$ is the $k$-algebra incarnation of the $n$-sphere, i.e., the trivial square-zero extension $k \oplus k[-n]$. Such extensions of $B'$ of $B$ are exactly the infinitesimal extensions of $B$ by $B[-n]$, and hence they are classified by the cotangent complex. We record this in the following

\begin{prop}
Let $A$ be a classical $k$-algebra. Now the deformations $A'$ of $A$ over $k[S^n]$ are in one to one correspondence with $\pi_0(\Hom(\Lb_{A/k}, A[-n-1])) \cong H^{n+1} (\Tb_{A/k})$, where $\Tb_{A/k}$ is the \emph{tangent complex} of $A$ over $k$. Note that such a deformation $A'$ is necessarily derived, and the map $A' \to A$ is the map to the truncation. 
\end{prop}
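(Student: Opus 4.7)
The plan is to deduce the proposition from the preceding theorem by identifying $k[S^n]$-deformations of $A$ with $k$-algebra infinitesimal extensions of $A$ whose homotopy fibre is $A[-n]$. Applying the theorem with base ring $k$, $B = A$, $M = A$, and with its internal shift index set to $n-1$ (so that the theorem's ``$M[-n-1]$'' becomes $A[-n]$), such extensions are classified by $\pi_0 \Hom_A(\Lb_{A/k}, A[-n-1])$, which is exactly the desired group. The final isomorphism with $H^{n+1}(\Tb_{A/k})$ is immediate from $\Tb_{A/k} = \Hom_A(\Lb_{A/k}, A)$ together with the standard translation between homotopical and cohomological degree conventions: shifting the target by $[-n-1]$ shifts the mapping complex the same way, so $\pi_0 \Hom_A(\Lb_{A/k}, A[-n-1]) = \pi_{-(n+1)}(\Tb_{A/k}) = H^{n+1}(\Tb_{A/k})$.

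The easier direction of the identification is that a pushout square as displayed produces an infinitesimal extension with the correct fibre: base changing the fibre sequence $k[-n] \to k[S^n] \to k$ along $k[S^n] \to A'$ gives
\[
\mathrm{fib}(A' \to A) \;\simeq\; A' \otimes_{k[S^n]} k[-n] \;\simeq\; A \otimes_k k[-n] \;=\; A[-n],
\]
where the middle equivalence uses that the augmentation ideal $k[-n] \subset k[S^n] = k \oplus k[-n]$ is square zero, so its $k[S^n]$-module structure factors through $k$. In particular $\pi_0(A') = \pi_0(A) = A$, confirming that $A' \to A$ is really the truncation map as claimed in the statement.

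The reverse direction is the main technical content: every infinitesimal extension $A' \to A$ of $k$-algebras with fibre $A[-n]$ should be upgradeable, uniquely up to contractible choice, to a $k[S^n]$-algebra making the defining square a pushout. Morally, $k[S^n] \to k$ is itself the universal infinitesimal extension of $k$ by $k[-n]$, so any such extension of $A$ must be a base change of it; more concretely, one can use the pullback description of $A'$ from the preceding theorem, observe that the trivial square zero extension $A \oplus A[-n-1]$ appearing there carries a canonical $k[S^n]$-algebra structure, and transport this structure to $A'$ through the pullback. The main obstacle in writing a complete proof is verifying that this assignment is a bijection on equivalence classes and interacts correctly with the derivation-based classification of the theorem; granted this bookkeeping, the proposition follows immediately from the preceding theorem.
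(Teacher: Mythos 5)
Your proposal takes essentially the same route as the paper, which likewise records the proposition as an immediate consequence of the preceding classification theorem after identifying $k[S^n]$-algebra deformations of $A$ with infinitesimal extensions of $A$ by $A[-n]$; your indexing (internal shift $n-1$) and the translation $\pi_0\Hom_A(\Lb_{A/k},A[-n-1]) \cong H^{n+1}(\Tb_{A/k})$ both match the paper's conventions. In fact you give more detail than the paper, which asserts the identification without argument, whereas you verify the fibre computation via base change along $k[-n] \to k[S^n] \to k$ and correctly flag the converse identification as the only point left unverified.
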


The above results have immediate global analogues for schemes.

\subsection{Deformation sequences in derived infinitesimal extensions}\label{DeformationSequences}

Let us have an extension $\Oc_{X'} \to \Oc_X$ defined by
\begin{center}
\begin{tikzpicture}[scale = 1.5]
\node (A2) at (2,2) {$\Oc_X$};
\node (B2) at (2,1) {$\Oc_X \oplus \Fc[-1]$};
\node (A1) at (0,2) {$\Oc_{X'}$};
\node (B1) at (0,1) {$\Oc_X$};
\path[every node/.style={font=\sffamily\small}]
(A1) edge[->] (A2)
(B1) edge[->] node[below]{$(1,d)$}(B2)
(A1) edge[->] (B1)
(A2) edge[->] node[right]{$(1,0)$} (B2)
;
\end{tikzpicture}
\end{center} 
where $\Fc$ is a connective quasi-coherent sheaf over $X$, and $d,0$ are derivations $\Oc_X \to \Fc[-1]$. Note that this includes, but is more general than our definition of an infinitesimal extension (although that will be our only case of interest).

Taking vertical in the above diagram cofibres, we get
\begin{center}
\begin{tikzpicture}[scale = 1.5]
\node (A2) at (2,2) {$\Oc_X \oplus \Fc[-1]$};
\node (B2) at (2,1) {$\Fc[-1]$};
\node (A1) at (0,2) {$\Oc_{X}$};
\node (B1) at (0,1) {$\Fc[-1]$};
\path[every node/.style={font=\sffamily\small}]
(A1) edge[->] node[above]{$(1,d)$} (A2)
(B1) edge[->] (B2)
(A1) edge[->] (B1)
(A2) edge[->] node[right]{$p_2$} (B2)
;
\end{tikzpicture}
\end{center}  
where $p_2$ is the natural projection. Hence the left vertical map, the map that induces the connecting morphism of the long exact sequence of cohomology groups induced by the cofibre sequence
\begin{equation*}
\Fc \to \Oc_{X'} \to \Oc_{X},
\end{equation*}  
is $d$. To understand the behavior of the Picard group in infinitesimal extensions, we need the following slight modification of above.

\begin{prop}
The connecting morphisms $H^{i}(\Oc^*_X) \to H^{i+1}(\Fc)$ in the long exact sequence induced by the cofibre sequence
\begin{equation*}
\Fc \to \Oc_{X'}^* \to \Oc_{X}^*
\end{equation*}
are given by the \emph{log differential} $\delta: \Oc_{X}^* \to \Fc[-1]$ associated to $d$, i.e., the map ''defined'' by the formula $a \mapsto d(a)/a$.
\end{prop}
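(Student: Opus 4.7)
The plan is to mirror the additive argument given just above the proposition, with the units functor applied throughout. Applying $(-)^*$ to the defining pullback square produces, since $(-)^*$ is limit-preserving, a pullback square
\begin{center}
\begin{tikzpicture}[scale=1.5]
\node (A) at (0,2) {$\Oc_{X'}^*$};
\node (B) at (2,2) {$\Oc_X^*$};
\node (C) at (0,1) {$\Oc_X^*$};
\node (D) at (2,1) {$(\Oc_X \oplus \Fc[-1])^*$};
\path[every node/.style={font=\sffamily\small}]
(A) edge[->] (B)
(A) edge[->] (C)
(B) edge[->] node[right]{$(1,0)^*$} (D)
(C) edge[->] node[below]{$(1,d)^*$} (D);
\end{tikzpicture}
\end{center}
so the rest of the proof reduces to identifying a particular composite involving the units of the trivial square-zero extension.

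The central computation is that identification. Using the multiplication rule $(a,m)(a',m') = (aa', am'+a'm)$, every unit factors as $(a,m) = (a,0)\cdot(1, a^{-1}m)$, giving a splitting $(\Oc_X \oplus \Fc[-1])^* \simeq \Oc_X^* \times (1 + \Fc[-1])$. Since we work in characteristic $0$ and $\Fc[-1]$ is strictly square-zero in the extension, the formulas $\exp(x) = 1+x$ and $\log(1+x) = x$ give mutually inverse equivalences $(1+\Fc[-1], \cdot) \simeq (\Fc[-1], +)$. Under the composite identification $(\Oc_X \oplus \Fc[-1])^* \simeq \Oc_X^* \times \Fc[-1]$, the two maps rewrite as $(1,d)^*\colon a \mapsto (a, d(a)/a)$ and $(1,0)^*\colon a \mapsto (a, 0)$.

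The rest is a formal replay of the vertical-cofibre argument. Taking cofibres of the two vertical maps, the cofibre of the right vertical is the projection $\Oc_X^* \times \Fc[-1] \to \Fc[-1]$, and the composite $\Oc_X^* \xrightarrow{(1,d)^*} \Oc_X^* \times \Fc[-1] \to \Fc[-1]$ is precisely $\delta\colon a \mapsto d(a)/a$. By the pullback property the cofibre of the left vertical is canonically $\Fc[-1]$ as well, and the induced map from $\Oc_X^*$ -- which is exactly the connecting morphism of the cofibre sequence $\Fc \to \Oc_{X'}^* \to \Oc_X^*$ -- therefore coincides with $\delta$.

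The main obstacle is making rigorous the derived exp/log splitting $(1+\Fc[-1], \cdot) \simeq (\Fc[-1], +)$. Classically this is immediate for a square-zero ideal in characteristic $0$, and here the naive series terminate identically since the multiplication on $\Fc[-1] \subset \Oc_X \oplus \Fc[-1]$ is the zero pairing; what needs verifying is only that these algebraic formulas assemble into honest morphisms in the appropriate $\infty$-category of sheaves of connective abelian group objects, after which the rest of the argument is purely formal.
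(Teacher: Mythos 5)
Your proposal is correct and follows essentially the same route as the paper: apply the limit-preserving units functor to the defining pullback square, identify the connecting morphism as the composite $\Oc_X^* \xrightarrow{(1,d)^*} (\Oc_X \oplus \Fc[-1])^* \to \Fc[-1]$, and read off the formula $a \mapsto d(a)/a$. The only difference is one of detail: where the paper says the formula is ``readily seen to hold in every degree'' in a concrete model, you spell out that verification via the explicit splitting $(\Oc_X \oplus \Fc[-1])^* \simeq \Oc_X^* \times \Fc[-1]$ coming from $(a,m) \mapsto (a, m/a)$ and the terminating $\exp/\log$ identification, which is a welcome elaboration rather than a different argument.
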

\begin{proof}
Taking multiplicative groups of units preserves all limits of rings (this is true by definition as $A^*$ is the space of maps from $k[t,t^{-1}]$ to $A$), and hence we get the following diagram in Abelian sheaves
\begin{center}
\begin{tikzpicture}[scale = 1.5]
\node (D2) at (2.5,3) {$\Fc$};
\node (D1) at (0,3) {$\Fc$};
\node (C2) at (2.5,0) {$\Fc[-1]$};
\node (C1) at (0,0) {$\Fc[-1]$};
\node (A2) at (2.5,2) {$\Oc_X^*$};
\node (B2) at (2.5,1) {$(\Oc_X \oplus \Fc[-1])^*$};
\node (A1) at (0,2) {$\Oc_{X'}^*$};
\node (B1) at (0,1) {$\Oc_X^*$};
\path[every node/.style={font=\sffamily\small}]
(A1) edge[->] (A2)
(B1) edge[->] node[below]{$(1,d)$}(B2)
(A1) edge[->] (B1)
(A2) edge[->] node[right]{$(1,0)$} (B2)
(D1) edge[->] (D2)
(D1) edge[->] (A1)
(D2) edge[->] (A2)
(B1) edge[->] node[right]{$\delta$} (C1)
(B2) edge[->] (C2)
(C1) edge[->] (C2)
;
\end{tikzpicture}
\end{center} 
where all the squares are pullback squares, and the columns are distinguished triangles. The formula given in the statement follows trivially from considering the situation in a concrete model (say, simplicial commutative $k$-algebras, where the result is readily seen to hold in every degree).
\end{proof}

In order to utilize the above result, we need to be able to understand how the log derivation $\delta$ acts on the related cohomology groups. The result is most easily stated and proved in terms of Čech cohomology.

\begin{prop}
Suppose $\Fc$ is a quasi-coherent and discrete sheaf on $X$, $\Ac$ a discrete Abelian sheaf on $X$, and let $\delta: \Ac \to \Fc[-n]$ be a map of Abelian sheaves. Let $\Uc = (U_i)_{i \in I}$ be an affine cover of $X$, and suppose $\delta$ is given by the Čech cycle of maps $(\delta_{i_0 \cdots i_n}) $ from $\Ac$ to $\check{C}^{n}_\Uc(\Fc)$.

Now the induced maps $\check{H}_\Uc^j(\Ac) \to H^{n+j}(\Fc)$ can be presented by the formula
\begin{equation*}
(\delta a)_{i_0 \cdots i_{n+j}} = (-1)^n \delta_{i_0 \cdots i_n} (a_{i_n \cdots i_{n+j}})
\end{equation*}
on Čech cycles.
\end{prop}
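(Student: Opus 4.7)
The plan is to recognize the formula as an instance of the cup product in Čech cohomology and to verify it by an explicit chain-level computation. The affineness of the cover together with quasi-coherence of $\Fc$ guarantees that the sheaf-level Čech complex $\check{C}^\bullet_\Uc(\Fc)$ is a resolution of $\Fc$, so the Čech cocycle condition on $(\delta_{i_0 \cdots i_n})$ is exactly the statement that the restricted maps $\delta_{i_0\cdots i_n}\colon \Ac|_{U_{i_0\cdots i_n}} \to \Fc|_{U_{i_0\cdots i_n}}$ assemble into a chain map from $\Ac$ (viewed in degree $0$) into an appropriate shift of $\check{C}^\bullet_\Uc(\Fc)$, and that this chain map represents $\delta$ in the derived category.

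Next I would model the induced map on cohomology at the Čech chain level. The natural candidate is the map of Čech total complexes $\check{C}^\bullet_\Uc(\Ac) \to \check{C}^{\bullet + n}_\Uc(\Fc)$ sending a $j$-cochain $a = (a_{i_0 \cdots i_j})$ to the $(n+j)$-cochain defined by the right hand side of the stated formula. The argument then has two parts: first, verify that this is a chain map, which reduces by a direct combinatorial computation to the cocycle condition on $(\delta_{i_0\cdots i_n})$ combined with the Čech coboundary formula for $\Ac$; second, verify that this chain map represents the action of $\delta$ on derived global sections, which by construction is immediate in degree $j = 0$ and then follows from the standard properties of cup product with a fixed class.

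The main obstacle is pinning down the sign $(-1)^n$: it is the Koszul sign produced when commuting a degree-$n$ operation past the Čech differential. I would handle this by unwinding the chain-map check in a concrete model — either chain complexes of abelian sheaves or the simplicial setting invoked in the preceding proof — where all the differentials and shift isomorphisms can be written down explicitly. Once the sign is forced by requiring the chain-map property to hold, the displayed formula follows simply by reading off the components on cocycles.
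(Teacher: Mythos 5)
Your plan matches the paper's proof in substance: both arguments verify that the displayed formula defines a map of Čech complexes $\check{C}^*_\Uc(\Ac) \to \check{C}^{*+n}_\Uc(\Fc)$ extending the given degree-zero data (with the $(-1)^n$ arising exactly as the shift/Koszul sign you identify), and then conclude that this chain-level map computes the induced map $\check{H}^j_\Uc(\Ac) \to H^{n+j}(\Fc)$. The only divergence is in how that last identification is cited: where you appeal to ``standard properties of cup product with a fixed class,'' the paper argues the essential uniqueness of such an extension directly, using that the sheaf-level complex $\check{C}^*_\Uc(\Ac)$ resolves $\Ac$ and that mapping quasi-isomorphically into an injective resolution of $\Fc$ detects the homotopy class -- both justifications rest on the same Čech-to-derived-functor comparison, so the approaches are essentially the same.
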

\begin{proof}
As $\check{C}^*_\Uc(\Ac)$ is weakly equivalent to $\Ac$, one hopes that there would be only one map (up to homotopy) $\check{C}^*_\Uc(\Ac) \to \check{C}^{*+n}_\Uc(\Fc)$ that extends the original map $\Ac \to \check{C}^{n}_\Uc(\Fc)$. If this were true, then it would suffice to verify that the above formula for $\delta$ gives a well defined map of chain complexes $\check{C}^*_\Uc(\Ac) \to \check{C}^{*+n}_\Uc(\Fc)$. This is indeed true, and the easy algebraic manipulation is left for the reader. Now the fact that the formula given for $\delta$ is the right one follows from the fact that the Čech complex maps (quasi-isomorphically) to an injective resolution and that maps to injective resolutions preserve homotopy equivalences. 
\end{proof}

\begin{rem}
Note that the assumptions of the above proposition are satisfied in our case of interest. Indeed, suppose $X$ is a classical scheme, and $d: \Oc_X \to \Fc[-n]$ is a derivation. This can always be presented as a cocycle $\check{C}^n_\Uc(\Fc)$ valued derivation for any affine cover $\Uc$, and hence we also obtain a similar presentation for the induced log derivation $\delta$. 
\end{rem}

Returning to our specific case of interest, let $X$ be a classical $k$-scheme. It is clear from the formula given by the above proposition that the map
\begin{equation*}
H^{i}(T_X) \times H^{j}(\Oc_X^*) \to H^{i+j}(\Oc_X)
\end{equation*}
is $\Zb$-bilinear, and $k$-linear in the first argument. As a special case we obtain an \emph{obstruction pairing}
\begin{equation*}
\{\text{Deformations $X'$ of $X$ over $k[S^n]$}\} \times \Pic(X) \to H^{n+2}(\Oc_X)
\end{equation*}
controlling which line bundles on $X$ extend to which deformations $X'$.

\section{The example}\label{Computation}

In this section we are going to give the example. Let $X \hookrightarrow \Pb^n$ be a smooth hypersurface of degree $n+1$ defined as the vanishing locus of a homogeneous polynomial $F$. Without loss of generality we may assume that $X$ does not contain the point $[1 : 0 : \cdots : 0]$ so that $\Uc = (U_i|_X)^n_{i \geq 1}$, where $(U_i)^n_{i \geq 0}$ is the standard open cover of $\Pb^n$, is an affine cover of $X$.

Computing the Čech cohomology groups of $\Oc_X$ and $T_X$ associated to the above covering, one obtains the following two lemmas. The results are completely elementary, and can be worked out by nothing more than a few pages of diagram chasing. For completeness, however, we give short proofs.

\begin{lem}\label{CohomologyOfStructureSheaf}
Suppose $n \geq 3$. Then the cohomology group $H^{n-1}(\Oc_X)$ is isomorphic to $k$, and it is generated by the Čech cocycle
\begin{equation*}
{\partial_0 F \over x_1 \cdots x_n}.
\end{equation*}
Moreover, $H^{i}(\Oc_X) \cong 0$ for $1 \leq i \leq n-2$.
\end{lem}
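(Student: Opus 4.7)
The plan is to compute $H^*(X, \Oc_X)$ by pushing forward along the Koszul short exact sequence
\begin{equation*}
0 \to \Oc_{\Pb^n}(-n-1) \xrightarrow{\cdot F} \Oc_{\Pb^n} \to \Oc_X \to 0
\end{equation*}
on $\Pb^n$ and feeding the result into the long exact sequence, using only the classical values of $H^i(\Pb^n, \Oc(d))$.

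For the vanishing statement, note that in the range $1 \leq i \leq n-2$ one has $H^i(\Pb^n, \Oc(-n-1)) = 0$ (since $i < n$), $H^i(\Pb^n, \Oc) = 0$ (since $0 < i < n$), and $H^{i+1}(\Pb^n, \Oc(-n-1)) = 0$ (since $i+1 < n$), so $H^i(X, \Oc_X) = 0$ is forced. For $i = n-1$ the vanishing of $H^{n-1}(\Pb^n, \Oc)$ and $H^n(\Pb^n, \Oc)$ makes the connecting map an isomorphism $H^{n-1}(X, \Oc_X) \xrightarrow{\sim} H^n(\Pb^n, \Oc(-n-1)) \cong k$.

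To pin down the explicit Čech representative, I would invert the connecting isomorphism on the standard cover $\Vc = \{U_0, \ldots, U_n\}$ of $\Pb^n$. The canonical generator of $\check{H}^n(\Vc, \Oc(-n-1)) \cong k$ is the Laurent cocycle $1/(x_0 x_1 \cdots x_n)$. Multiplying by $F$ yields the element $F/(x_0 \cdots x_n) \in \check{C}^n(\Vc, \Oc)$, which must be a coboundary because $H^n(\Pb^n, \Oc) = 0$; Euler's identity $(n+1) F = \sum_{i=0}^n x_i \partial_i F$ exhibits this coboundary by hand:
\begin{equation*}
\frac{F}{x_0 \cdots x_n} = \frac{1}{n+1} \sum_{i=0}^n \frac{\partial_i F}{x_0 \cdots \widehat{x_i} \cdots x_n},
\end{equation*}
so that $F/(x_0 \cdots x_n) = (d\tilde{\beta})_{0 \cdots n}$ with $\tilde{\beta}_{0 \cdots \widehat{i} \cdots n} = (-1)^i \partial_i F / \bigl((n+1) x_0 \cdots \widehat{x_i} \cdots x_n\bigr)$.

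Reducing $\tilde{\beta}$ modulo $F$ yields a Čech cocycle $\beta \in \check{C}^{n-1}(\Vc|_X, \Oc_X)$ whose class, by construction, hits the generator of $H^n(\Pb^n, \Oc(-n-1))$ under the connecting map and therefore generates $H^{n-1}(X, \Oc_X)$. Passing from $\Vc|_X$ to the subcover $\Uc = \{U_i|_X\}_{i=1}^n$ along the tautological refinement simply drops every component of $\beta$ involving the index $0$, leaving the single entry $\beta_{1 \cdots n} = \partial_0 F / ((n+1) x_1 \cdots x_n)$, which is the claimed generator up to a nonzero scalar. The main annoyance I expect is Čech bookkeeping (signs and the direction of the refinement map); the geometric input—Euler's relation producing the coboundary—is elementary.
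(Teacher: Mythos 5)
Your argument is correct, and for the main point of the lemma -- identifying the explicit generator -- it takes a genuinely different route from the paper. The paper treats the computation of the groups themselves ($H^{n-1}\cong k$, vanishing in between) as standard, just as you do via the twisted Koszul sequence, but it establishes that $\partial_0 F/(x_1\cdots x_n)$ generates by a direct non-coboundary check inside the \v{C}ech complex of $\Oc_X$ on the $n$-element cover $\Uc$: one expands $\partial_0 F$ into monomials, observes that every term except $c\,x_0^n$ (with $c\neq 0$ because $[1:0:\cdots:0]\notin X$) contributes a coboundary, and that $x_0^n/(x_1\cdots x_n)$ is not a coboundary. You instead invert the connecting isomorphism $H^{n-1}(X,\Oc_X)\xrightarrow{\sim}H^n(\Pb^n,\Oc(-n-1))$ explicitly: Euler's relation $(n+1)F=\sum_i x_i\partial_i F$ exhibits the lift $\tilde\beta$ whose \v{C}ech differential is $F/(x_0\cdots x_n)$, so the reduction $\beta$ maps to the canonical generator $1/(x_0\cdots x_n)$, and restricting along the subcover refinement leaves exactly $\partial_0 F/((n+1)x_1\cdots x_n)$. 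Your version buys a self-contained derivation that does not require analyzing which Laurent monomials become coboundaries on $X$ (only the standard description of $H^n(\Pb^n,\Oc(-n-1))$ on $\Pb^n$ is needed), at the cost of connecting-map and refinement bookkeeping; the paper's version is shorter but leans on the reader to verify the coboundary claims on $X$. Note that your final scalar $1/(n+1)$ is harmless only because the ground field has characteristic $0$ (as the paper assumes), and that the assumption $[1:0:\cdots:0]\notin X$, which the paper uses to get $c\neq 0$, enters your argument only through the fact that $\Uc$ is a cover of $X$ at all.
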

\begin{proof}
All the other claims than that the given cocycle generates are standard. The last remaining claim follows from the fact that $\partial_0 F$ has a term $c x_0^n$, where $c \not = 0$, and $x_0^n / (x_1 \cdots x_n)$ is not a boundary (unlike all other possibilities).
\end{proof}

\begin{lem}\label{TheDerivation}
Suppose $n \geq 4$. Then the cohomology group $H^{n-2}(T_X)$ is isomorphic to $k$, and it is generated by the Čech cocycle $d = (d_{12\cdots \hat{i} \cdots n})_{1 \leq i \leq n}$, where
\begin{equation*}
d_{12\cdots \hat{i} \cdots n} = (-1)^i {(\partial_0 F) \partial_i - (\partial_i F)\partial_0 \over x_1 x_2 \cdots \hat{x_i} \cdots x_n}
\end{equation*}
\end{lem}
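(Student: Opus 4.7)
The plan is to reduce the computation of $H^{n-2}(T_X)$ to the previously computed $H^{n-1}(\Oc_X)$ via the Euler and normal bundle sequences on $X$, and then to identify the explicit formula as the generator by a direct Čech trace.

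First I would combine the normal bundle sequence $0 \to T_X \to T_{\Pb^n}|_X \to \Oc_X(n+1) \to 0$ with the restricted Euler sequence $0 \to \Oc_X \to \Oc_X(1)^{n+1} \to T_{\Pb^n}|_X \to 0$. Using the ambient sequences $0 \to \Oc_{\Pb^n}(d-n-1) \to \Oc_{\Pb^n}(d) \to \Oc_X(d) \to 0$ for $d=1$ and $d=n+1$, together with the vanishing of $H^i(\Pb^n, \Oc(\ast))$ in degrees $0 < i < n$, a quick check shows that for $n \geq 4$ all of $H^{n-3}(\Oc_X(n+1))$, $H^{n-2}(\Oc_X(n+1))$, $H^{n-2}(\Oc_X(1))$ and $H^{n-1}(\Oc_X(1))$ vanish. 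Combined with Lemma \fref{CohomologyOfStructureSheaf}, the two long exact sequences collapse to a chain of isomorphisms $H^{n-2}(T_X) \isomto H^{n-2}(T_{\Pb^n}|_X) \isomto H^{n-1}(\Oc_X) \cong k$, the second arrow being the Euler connecting map.

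Next I would check that the formula defines a cocycle in $T_X$. Each $V_i := (\partial_0 F)\partial_i - (\partial_i F)\partial_0$ annihilates $F$ and so is tangent to $X$, and by homogeneity $V_i/(x_1 \cdots \hat{x_i} \cdots x_n)$ is a section of $T_{\Pb^n}|_X$ on $U_{1 \cdots \hat{i} \cdots n}$ landing in $T_X$. The Čech coboundary is a single term living on $U_{1 \cdots n}$; clearing denominators and invoking the Euler relation $\sum_{j=0}^n x_j \partial_j F = (n+1)F$, the alternating sum reduces to $[(\partial_0 F)\, \xi - (n+1) F\, \partial_0]/(x_1 \cdots x_n)$, where $\xi = \sum_{j=0}^n x_j \partial_j$ is the Euler section of $\Oc(1)^{n+1}$. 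On $X$ the term containing $F$ vanishes, and $\xi$ is killed by the Euler quotient $\Oc_X(1)^{n+1} \twoheadrightarrow T_{\Pb^n}|_X$, so $d$ is indeed a cocycle.

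Finally, the same formula also provides a lift $\tilde d$ of $d$ to a cochain in $\Oc_X(1)^{n+1}$; reading the preceding computation inside $\Oc_X(1)^{n+1}$ rather than in $T_{\Pb^n}|_X$ exhibits $\delta \tilde d$ on $U_{1 \cdots n}$ as, up to sign, the image of the Čech cocycle $(\partial_0 F)/(x_1 \cdots x_n)$ under the Euler inclusion $\Oc_X \hookrightarrow \Oc_X(1)^{n+1}$. Hence the Euler connecting map sends $[d]$ to the generator of $H^{n-1}(\Oc_X)$ produced in Lemma \fref{CohomologyOfStructureSheaf}, and via the isomorphism chain above $[d]$ is a generator of $H^{n-2}(T_X)$. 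The only real obstacle is the sign bookkeeping: the factor $(-1)^i$ in the definition of $d$ has to be matched against the alternating signs of the Čech differential so that the coboundary collapses to a scalar multiple of the Euler section rather than an uncontrolled element of $\Oc_X(1)^{n+1}$.
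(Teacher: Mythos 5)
Your argument is correct, and it takes a genuinely different route from the paper on the two substantive points. For the dimension count $H^{n-2}(T_X) \cong k$, the paper invokes Serre duality (using that the canonical bundle of a degree $n+1$ hypersurface is trivial) together with the Lefschetz theorem giving $H^1(X,\Omega^1_X) \cong k$, whereas you obtain it from the normal-bundle and restricted Euler sequences plus the standard vanishing of $H^i(\Oc_X(d))$ deduced from $\Pb^n$; your four vanishing claims do all check out for $n \geq 4$, so the chain $H^{n-2}(T_X) \isomto H^{n-2}(T_{\Pb^n}|_X) \isomto H^{n-1}(\Oc_X)$ is valid. For nontriviality of $[d]$, the paper simply defers to the computation following the lemma (pairing $d$ against $\Oc(-1)|_X$ via the log differential produces the generator of $H^{n-1}(\Oc_X)$), while you prove it inside the lemma by lifting $d$ to an $\Oc_X(1)^{n+1}$-valued cochain and computing the Euler connecting map, whose value on $[d]$ is exactly the generating cocycle $(\partial_0 F)/(x_1\cdots x_n)$ of Lemma \fref{CohomologyOfStructureSheaf}; since that connecting map is an isomorphism by your vanishing statements, $[d]$ generates. (The cocycle verification itself is the same Euler-relation manipulation as in the paper, and the sign issue you flag is harmless: one only needs the coboundary to collapse onto a multiple of the Euler section, and being a generator is insensitive to the overall sign.) What your route buys is a self-contained, entirely elementary Čech/exact-sequence proof that also explains conceptually where the formula for $d$ comes from — it is the preimage of the generator of $H^{n-1}(\Oc_X)$ under the Euler boundary map; what the paper's route buys is brevity, at the price of citing Serre duality and Lefschetz and of leaving the nontriviality of $[d]$ to the subsequent calculation.
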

\begin{rem}
As is customary, we use the hat to denote an index or a term which is left out.
\end{rem}
\begin{proof}
The fact that $H^{n-2}(T_X) \cong k$ follows immediately from Serre duality once we recall that by hard Lefschetz $H^1(X; \Omega^1_X) \cong H^1(\Pb^n; \Omega^1_{\Pb^n}) \cong k$. Moreover, $d$ is a cocycle in derivations on $X$: clearly all the chosen derivations send $F$ to 0, so they are derivations on $X$, and they do satisfy the cocycle condition
\begin{align*}
\sum_{i=1}^n (-1)^i d_{12\cdots \hat{i} \cdots n} &= \sum_{i=1}^n {(\partial_0 F) \partial_i - (\partial_i F)\partial_0 \over x_1 x_2 \cdots \hat{x_i} \cdots x_n} \\
&= \sum_{i=1}^n {x_i (\partial_0 F) \partial_i - x_i (\partial_i F)\partial_0 \over x_1 x_2 \cdots x_n}\\
&= {x_0 (\partial_0 F) \partial_0 - x_0 (\partial_0 F)\partial_0 \over x_1 x_2 \cdots x_n} \\
&=0
\end{align*}
as the Euler form equals $0$. Hence $d$ generates $H^{n-2}(T_X)$ if it is nontrivial, but this in fact follows from the calculation following the lemma.
\end{proof}

We are now ready to show that the line bundle $\Oc_{\Pb^n}(-1)|_X$ does not extend to any nontrivial first order deformation of $X$ over $k[S^{n-3}]$. Recall that the transition maps $\alpha_{ij}$ of $\Oc(-1)$ are defined as $\alpha_{ij} = {x_j \over x_i}$.

We can now just apply the generating log differential to $\Oc_{\Pb^n}(-1)|_X$.
\begin{align*}
(\tilde d \alpha)_{12 \cdots n} &= (-1)^{n-2} \tilde{d}_{12 \cdots n-1}(\alpha_{n-1,n}) \\
&=  (-1)^{n-2} {x_{n-1} \over x_n} (-1)^{n} {(\partial_0 F) \partial_n - (\partial_n F)\partial_0 \over x_1 x_2 \cdots x_{n-1}} \left( x_n \over x_{n-1} \right) \\
&= {x_{n-1} \over x_n} {(\partial_0 F)x_{n-1}^{-1} \over x_1 x_2 \cdots x_{n-1}} \\
&= {(\partial_0 F)\over x_1 x_2 \cdots x_{n}} 
\end{align*}
which is known to be nonzero by an earlier lemma. We have proven the following

\begin{thm}
Let $X \hookrightarrow \Pb^n$, $n \geq 4$, be a smooth projective hypersurface of degree $n+1$. Then the line bundle $\Oc(1)|_X$ does not extend to any nontrivial deformation of $X$ over $k[S^{n-3}]$.
\end{thm}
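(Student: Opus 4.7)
My plan is to apply the obstruction pairing built at the end of Section \fref{DeformationSequences}. Specialised to deformations over $k[S^{n-3}]$, it takes the form
\[
H^{n-2}(T_X) \times \Pic(X) \to H^{n-1}(\Oc_X),
\]
and a class in $\Pic(X)$ extends to the deformation classified by $\xi \in H^{n-2}(T_X)$ exactly when its pairing with $\xi$ vanishes. Since this pairing is $k$-linear in its first argument and $H^{n-2}(T_X) \cong k$ by Lemma \fref{TheDerivation}, every nontrivial deformation is a nonzero scalar multiple of the explicit generator $d$ produced there. It therefore suffices to verify that the pairing $\langle d, \Oc(1)|_X\rangle$ is nonzero in $H^{n-1}(\Oc_X)$.

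For this I would compute directly on Čech cochains with respect to the affine cover $\Uc = (U_i|_X)_{i \geq 1}$. By bilinearity it is harmless, and computationally cleaner, to pair $d$ against the dual bundle $\Oc(-1)|_X$, whose transition cocycle is $\alpha_{ij} = x_j/x_i$. The proposition relating log differentials to Čech cohomology then expresses $\langle d, \Oc(-1)|_X \rangle$ as an $(n-1)$-cocycle whose unique nontrivial component, on $U_1 \cap \cdots \cap U_n$, is given (up to a tracked sign) by $d_{12 \cdots (n-1)}$ evaluated at $\alpha_{n-1,n} = x_n/x_{n-1}$.

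Unfolding this via the explicit formula for $d_{12 \cdots (n-1)}$ from Lemma \fref{TheDerivation}, the $(\partial_n F)\partial_0$ contribution drops out because $\partial_0(x_n/x_{n-1}) = 0$, and the surviving $(\partial_0 F)\partial_n$ term simplifies, after clearing the remaining denominator, to a nonzero scalar multiple of $\partial_0 F/(x_1 \cdots x_n)$. By Lemma \fref{CohomologyOfStructureSheaf} this is the generator of $H^{n-1}(\Oc_X) \cong k$, in particular nonzero, so the obstruction is nontrivial for every nonzero multiple of $d$ and the theorem follows. The only real difficulty is bookkeeping: the various Čech signs $(-1)^i$ and the $(-1)^n$ appearing in the log-differential formula must be tracked carefully so that the resulting cocycle is correctly identified with the generator of $H^{n-1}(\Oc_X)$ rather than being killed by a hidden cancellation.
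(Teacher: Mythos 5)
Your proposal is correct and follows essentially the same route as the paper: the same obstruction pairing $H^{n-2}(T_X) \times \Pic(X) \to H^{n-1}(\Oc_X)$, the same explicit Čech evaluation of the generating log differential on the transition cocycle $x_j/x_i$ of $\Oc(-1)|_X$ (with the $(\partial_i F)\partial_0$ term dropping out), and the same appeal to $k$-linearity in the deformation together with $H^{n-2}(T_X) \cong k$ to pass from the generator $d$ to all nontrivial deformations. No gaps beyond the sign bookkeeping you already flag.
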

\begin{proof}
Indeed, as we noticed earlier, the obstruction of a line bundle is $k$-linear in the deformation of $X$. We have shown that the obstruction is not 0 for the generating deformation, and therefore it will not be 0 for any nonzero multiple of it. 
\end{proof}

\begin{rem}
Note that when $n=3$, the derivation $\delta$ given in \fref{TheDerivation} is still a perfectly valid element of $H^1(T_X)$, and applying the log differential to $\Oc(1)|_X$ as above shows that $\Oc(1)|_X$ does not extend to the deformation associated to $\delta$. This has a moduli theoretic interpretation. Indeed, it is a well known fact that the moduli space of \emph{polarized K3 surfaces} (K3 surfaces equipped with an ample line bundle) is 19 dimensional. Therefore the kernel,  which can easily be checked to be 19 dimensional, of the map $H^1(T_X) \to H^2(\Oc_X)$ given by evaluating at $\Oc(1)|_X$ should be thought as the tangent space of the space of polarized $K3$ surfaces, sitting inside the tangent space of the moduli of $K3$ surfaces.
\end{rem}

Assume again that $n \geq 4$. It is known that the Picard group of a smooth hypersurface $X \hookrightarrow \Pb^n$ is isomorphic to $\Zb$ and generated by $\Oc(1) |_X$. Hence

\begin{thm}
Let $X \hookrightarrow \Pb^n$, $n \geq 4$, be a smooth projective hypersurface of degree $n+1$ over an infinite field. Then $\Pic(X) \cong 0$ and therefore $X$ fails to be quasi-projective.
\end{thm}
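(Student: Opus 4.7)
The plan is to combine the previous theorem with the Grothendieck--Lefschetz theorem on Picard groups of smooth hypersurfaces and the deformation long exact sequence of Section~\fref{DeformationSequences}. The statement should be read as concerning the derived scheme $X$ obtained as a nontrivial deformation, over $k[S^{n-3}]$, of a classical smooth projective hypersurface $X_0 \hookrightarrow \Pb^n$ of degree $n+1$; such a deformation exists by Lemma~\fref{TheDerivation}, and the hypothesis that $k$ be infinite is used to guarantee that a smooth $X_0$ of degree $n+1$ is available.

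First I would identify $\Pic(tX) = \Pic(X_0)$: by Grothendieck--Lefschetz applied to a smooth projective hypersurface of dimension $n-1 \geq 3$, it is freely generated by $\Oc(1)|_{X_0}$. The previous theorem says that $\Oc(1)|_{tX}$ has nonzero image under the obstruction pairing
\begin{equation*}
H^{n-2}(T_{tX}) \times \Pic(tX) \to H^{n-1}(\Oc_{tX}) \cong k.
\end{equation*}
Since this pairing is $\Zb$-linear in the Picard argument and $k$ has characteristic zero, the obstruction of $\Oc(m)|_{tX}$ equals $m$ times that of $\Oc(1)|_{tX}$, which is nonzero for every $m \neq 0$. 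Therefore no nontrivial element of $\Pic(tX)$ lifts, and the image of $\Pic(X) \to \Pic(tX)$ is trivial.

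For $\Pic(X) = 0$ I still need injectivity of $\Pic(X) \to \Pic(tX)$, which I would extract from the long exact sequence attached to the cofibre sequence $\Fc \to \Oc_X^* \to \Oc_{tX}^*$ of Section~\fref{DeformationSequences}. Tracking shifts for an extension classified by an element of $H^{n-2}(T_{tX})$ identifies $\Fc$ with $\Oc_{tX}[-(n-3)]$, so the relevant piece of the long exact sequence is
\begin{equation*}
H^0(\Oc_{tX}^*) \to H^1(\Fc) \to \Pic(X) \to \Pic(tX).
\end{equation*}
Now $H^1(\Fc) = H^{n-2}(\Oc_{tX}) = 0$ by Lemma~\fref{CohomologyOfStructureSheaf} (the index $n-2$ lies in the vanishing range $1 \leq i \leq n-2$), so injectivity follows. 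Combined with the preceding paragraph this gives $\Pic(X) = 0$. The final clause then follows from the characterization recalled in the introduction: a quasi-projective $X$ would carry a line bundle whose truncation is ample on $tX$, which is impossible once $\Pic(X) = 0$.

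The main obstacle will be keeping the cohomological shifts consistent -- especially the identification $\Fc \simeq \Oc_{tX}[-(n-3)]$ -- since an off-by-one slip would move $H^1(\Fc)$ outside the vanishing range of Lemma~\fref{CohomologyOfStructureSheaf}, and the injectivity argument is tightest precisely at the edge case $n = 4$.
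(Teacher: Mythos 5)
Your proposal is correct and matches the paper's argument: both establish that the connecting map $\Pic(tX) \to H^{n-1}(\Oc_{tX})$ is injective (nonzero obstruction on $\Oc(1)$ plus $\Zb$-linearity, so no nontrivial line bundle lifts) and that $\Pic(X) \to \Pic(tX)$ is injective because $H^1(\Fc) \cong H^{n-2}(\Oc_{tX}) = 0$, then conclude non-quasi-projectivity from the ample-line-bundle criterion. Your shift bookkeeping $\Fc \simeq \Oc_{tX}[-(n-3)]$ agrees with the paper's deformation sequence, including at the edge case $n=4$.
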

\begin{proof}
We have the deformation sequence
\begin{equation*}
\cdots \to H^{n-2}(\Oc_X) \to \Pic(X') \to \Pic(X) \stackrel \delta \to H^{n-1}(\Oc_X) \cdots
\end{equation*}
The claim follows from the fact that $\delta$ is injective and $H^{n-2}(\Oc_X)$ is trivial.
\end{proof}

\end{document}